\renewcommand{\frac}{\dfrac}
\newtheorem{theorem}{Theorem}[section]
\newtheorem{lemma}[theorem]{Lemma}
\newenvironment{proof}[1][Proof]{\begin{trivlist}
\item[\hskip \labelsep {\bfseries #1}]}{\end{trivlist}}
\begin{document}
%
\title{A unified recovery bound estimation for noise-aware $\ell_{q}$ optimization model in compressed sensing
\author{Zhi-Long Dong, Xiao-Qi Yang, Yu-Hong Dai}
\thanks{This work was supported in part by the ******************************, Grant **********, the *********************, Grant ***************, and the *******************************************************.}
 \thanks{Z.-L.~Dong and Y.-H. Dai are with the State Key Laboratory
of Scientific and Engineering Computing, Institute of Computational
Mathematics and Scientific/Engineering Computing, Academy of
Mathematics and Systems Science, Chinese Academy of Sciences,
Beijing, 100190, China (e-mail:
{\{{zldong,\,dyh}\}@lsec.cc.ac.cn}).}
\thanks{X.-Q.~Yang is with the Department of Applied Mathematics, The Hong Kong Polytechnic University, Kowloon, Hong Kong (e-mail:
 {mayangxq}@polyu.edu.hk).}}
\maketitle

\begin{abstract}
 In this letter, we present a unified result for the stable recovery bound of $\ell_q$ $(0<q\leq1)$ optimization model in compressed sensing, which is a constrained $\ell_q$ minimization problem aware of the noise in a linear system. Specifically, without using the restricted isometry constant (RIC), we show that the error between any global solution of the noise-aware $\ell_q$ optimization model and the ideal sparse solution of the noiseless model is upper bounded by a constant times the noise level, given that the sparsity of the ideal solution is smaller than a certain number. An interesting parameter $\gamma$ is introduced, which indicates the sparsity level of the error vector and plays an important role in our analysis. In addition, we show that when $\gamma > 2$, the recovery bound of the $\ell_q$ $(0<q<1)$ model is smaller than that of the $\ell_1$ model, and the sparsity requirement of the ideal solution in the $\ell_q$ $(0<q<1)$ model is weaker than that of the $\ell_1$ model.
\end{abstract}



\section{Introduction}
The problem of finding the sparsest representation possibility in an overcomplete dictionary $A$ is
\begin{equation}\label{P0}
(P_0):\begin{aligned}
\min_{x}&~\|x\|_0\\
 \text{s.t.}&~y_0=Ax,
\end{aligned}
\end{equation}
where $y_0\in\mathbb R^n, {A}\in\mathbb R^{m\times n}(m\ll n), x\in\mathbb R^n$, $\|x\|_0$ means the number of nonzero entries in $x$. In this letter, we assume that problem \eqref{P0} always has a unique ideal sparsest solution $x_0$, and the sparsity level defined by $N=\|x_0\|_0$ is small. This problem has applications in many areas, such as statistics and signal processing, etc \cite{eldar2012compressed}. However, it is well known that problem \eqref{P0} is NP-hard due to its combinatorial nature \cite{L0NP95}. Moreover, it is easy to notice that each feasible point of \eqref{P0} is a local minimizer, which makes it difficult to find the global solution.

A popular way to solve the original problem \eqref{P0} is to approximate it by replacing the $\ell_0$ norm with the convex $\ell_1$ norm
\begin{equation}\label{P1}
(P_1):\begin{aligned}
\min_{x}&~\|x\|_1\\
 \text{s.t.}&~y_0=Ax.
\end{aligned}
\end{equation}
A variety of results have shown that if the RIC of matrix $A$ satisfies certain condition, problem \eqref{P1} can recover the optimal signal in \eqref{P0} exactly \cite{TONY13,Candes05,DONOHO06E}.

In practice, it is more convincible to formulate the original problem \eqref{P0} into a noise-aware version. Actually, we can only observe a noisy version $y=y_0+w$, where $\omega\in\mathbb R^n$ is the noise, and $||\omega||_2\leq\epsilon$. Then we obtain a new problem
\begin{equation}\label{P0D}
(P_{0,\sigma}):\begin{aligned}
\min_{x}&~\|x\|_0\\
 \text{s.t.}&~||y-{A}x||_2\leq\sigma,
\end{aligned}
\end{equation}
where $\sigma$ is the estimation of the noise level $\epsilon$. A similar convexification strategy for \eqref{P0D} leads to
\begin{equation}\label{P1D}
(P_{1,\sigma}):\begin{aligned}
\min_{x}&~\|x\|_1\\
 \text{s.t.}&~||y-{A}x||_2\leq\sigma.
\end{aligned}
\end{equation}
Researchers in \cite{Candes05,Candes06,TONY13} have shown that there is a stable recovery bound for the solution to \eqref{P1D}, which is defined by the RIC and the noise level $\epsilon$, as well as the sparsity level $N$.

All the above results depend on RIC. However, as is well known, the computation of RIC for a given matrix is difficult \cite{RIP13}. In view of this, David L. Donoho and his collaborators proposed another way to show the stable recovery results for the sparse signal recovery problem constrained by a noise-aware overcomplete system \cite{DONOHO06}. They established the stable recovery bound defined by the {\it mutual coherence} constant of the dictionary, the sparsity level $N$, as well as the noise level $\epsilon$. 
Assume that the dictionary $A=[A_1,A_2,...,A_n]$ has normalized columns under $l^2$-norm, which means $\|A_i\|_2=1,\forall i=1,2,...,n$, $A_i$ is the $i$-th column of matrix $A$. The {\it mutual coherence} constant is defined as
\begin{equation}\label{mutualco}
M=M(A)=\max_{1\leq i,j\leq n,i\neq j}A_i^TA_j,
\end{equation}
thus $0\leq M\leq1$. Smaller $M$ means the dictionary is more incoherent. Compared with RIC, we can see that $M$ is much easier to compute for a given matrix $A$.

Based on the above definition, the authors in \cite{DONOHO06} established the following results:\\
1. Suppose $x_{0,\sigma}^*$ is the optimal solution to problem \eqref{P0D}, if $N<(M^{-1}+1)/2$, then
\begin{equation}
\|x_{0,\sigma}^*-x_0\|_2\leq C_0(M,N)*(\epsilon+\sigma),~~\forall \sigma\geq\epsilon>0,
\end{equation}
where the coefficient $C_0(M,N)=\sqrt{1/(1-M(2N-1))}$.\\
2. Suppose $x_{1,\sigma}^*$ is the optimal solution to problem \eqref{P1D}, if $N<(M^{-1}+1)/4$, then
\begin{equation}\label{P1Re}
\|x_{1,\sigma}^*-x_0\|_2\leq C_1(M,N)*(\epsilon+\sigma),~~\forall \sigma\geq\epsilon>0,
\end{equation}
where the coefficient $C_1(M,N)=\sqrt{1/(1-M(4N-1))}$.

At the same time, researchers have also found that, in some cases, the non-convex model could be more efficient in sparse signal recovery problems compared to \eqref{P1D} \cite{Chartrand08,Mingjunlai09,Saab10,Mourad10,yafeng2016}. There are a lot of works considering the sharp RIC condition for exact recovery of noiseless $\ell_q$ $(0<q<1)$ minimization problem, as well as the stable recovery bound for noise-aware $\ell_q$ $(0<q<1)$ minimization problem \cite{Saab08,Jinming14lq,song2014sparse}. These stable recovery bound has the same difficulty in computing the RIC of a given dictionary. Thus, following Donoho's work, we want to give a similar stable recovery bound for $\ell_q$ $(0<q<1)$ minimization problem without relying on RIC.

The main contribution of this letter is that we propose a unified stable recovery bound for the non-convex noise-aware model in compressed sensing. Specifically, consider using a non-convex relaxation strategy for $P_{0,\sigma}$, which replaces the $\ell_0$ norm with the $\ell_q$ $(0<q\leq1)$ quasi-norm. Minimize the $\ell_q$ quasi-norm of the coefficient vector, subject to a noise-aware linear system
\begin{equation}\label{PqD}
(P_{q,\sigma}):\begin{aligned}
\min_{x}&~\|x\|_{q}\\
 \text{s.t.}&~~||y-{A}x||_2\leq\sigma,
\end{aligned}
\end{equation}
where $\|x\|_q=(\sum |x_i|^q)^{1/q},~0<q\leq1$. Denote the solution set of problem \eqref{PqD} as $\mathcal X$, we present a unified estimation about  the upper bound of the error $||x^*_{q,\sigma}-x_0||_2, \forall x^*_{q,\sigma}\in\mathcal X$. If $N<\frac{\gamma^{2/q-2}(M+1)}{4^{1/q}M}$, then
\begin{equation}
\|x^*_{q,\sigma}-x_0\|_2\leq
C_q(M,N,\gamma)(\epsilon+\sigma), \forall x^*_{q,\sigma}\in\mathcal X,
\end{equation}
where $C_q(M,N,\gamma)=\sqrt{1/(1-
M(\frac{4^{1/q}}{\gamma^{2/q-2}}N-1))}$. $M$ and $N$ are the same as former definitions, $\gamma$ is a constant lies in the interval $[1/N,n/N]$, which is defined later. Further analysis shows that, when $\gamma>2$, the $\ell_q$ $(0<q<1)$ model outperforms the $\ell_1$ model in the sense that the former one has a smaller stable recovery bound, and the requirement of the sparsity $N$ is weaker than the latter one.

The rest of this letter is organized as follows. Section II presents several basic lemmas that are needed in subsequent analysis. In Section III, we show the main theorem and some analysis about the result. Section IV concludes the letter and makes some comments on future work.
\section{Some basic lemmas}

First, we recall a well-known result without showing the proof \cite{YaoHua16}.
\begin{lemma}\label{pq}
$\|x\|_p\leq\|x\|_q, \forall x\in\mathbb R^n,0<p\leq q$.
\end{lemma}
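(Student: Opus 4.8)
The plan is to prove this elementary comparison of the $\ell_p$ and $\ell_q$ (quasi-)norms by reducing it to a coordinatewise inequality after a single normalization step. First I would dispose of the trivial case $x=0$, where both sides vanish and equality holds. For $x\neq0$ I would exploit the positive homogeneity shared by both (quasi-)norms, namely $\|\lambda x\|_p=|\lambda|\,\|x\|_p$ and similarly for $q$: the claimed inequality is invariant under the rescaling $x\mapsto x/\|x\|_p$, so it suffices to establish it under the normalization $\|x\|_p=1$, i.e. $\sum_{i=1}^{n}|x_i|^p=1$. This is the only genuinely structural reduction in the argument; everything afterward is a pointwise estimate followed by summation.

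Under the normalization each coordinate obeys $|x_i|^p\leq1$, hence $|x_i|\leq1$ for every $i$. The key step is then the elementary fact that for a real number $t\in[0,1]$ and exponents $0<p\leq q$ one has $t^{q}\leq t^{p}$, since $t^{q}=t^{p}\,t^{q-p}$ with $t^{q-p}\leq1$ on the unit interval. Applying this with $t=|x_i|$ and summing over $i$ gives $\sum_{i}|x_i|^{q}\leq\sum_{i}|x_i|^{p}=1$, and raising to the $(1/q)$-th power, which is an increasing operation on nonnegative reals, yields the comparison between the two (quasi-)norms. No convexity, Hölder inequality, or triangle inequality is needed, which is convenient because for $0<q<1$ the functional $\|\cdot\|_q$ is only a quasi-norm.

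Since the lemma is the classical nesting of the $\ell_p$ scales, there is no real obstacle, and I would simply cite \cite{YaoHua16} while retaining the normalization argument above as a one-line self-contained justification. The two points I would nonetheless check carefully are the following. First, because the paper works in the regime $0<q\leq1$ where $\|\cdot\|_q$ lacks the triangle inequality, I would confirm (as above) that the proof uses only positive homogeneity and the pointwise power comparison, both of which survive the loss of subadditivity. Second, I would verify the orientation of the monotonicity against the uses in the subsequent lemmas and the main theorem, since the normalization argument naturally delivers $\|x\|_{q}\leq\|x\|_{p}$ for $0<p\leq q$; reconciling this with the direction as stated is the one detail I would want to pin down before invoking the lemma downstream.
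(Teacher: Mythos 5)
Your normalization argument is correct and, notably, supplies a proof where the paper gives none: the paper merely ``recalls'' this lemma without proof, citing \cite{YaoHua16}. Your steps --- discard $x=0$, rescale by positive homogeneity so that $\|x\|_p=1$, apply the pointwise bound $t^{q}\le t^{p}$ for $t\in[0,1]$ and $p\le q$, then sum and take $q$-th roots --- constitute the standard elementary route, and you are right that nothing in it requires subadditivity, so the argument remains valid in the quasi-norm regime $0<q<1$ in which the paper operates.

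Your closing worry about the orientation of the inequality is justified, and you should resolve it in favor of your computation: the lemma as printed is false. For $x=(1,1)$ with $p=1\le q=2$ it would assert $2=\|x\|_1\le\|x\|_2=\sqrt{2}$. What your argument actually proves, $\|x\|_q\le\|x\|_p$ for $0<p\le q$ (equivalently, $\|x\|_p\le\|x\|_q$ for $0<q\le p$), is exactly the form the paper uses downstream: the proof of Lemma \ref{ineqq} applies the lemma to the vector $(|a+b|,|b|)$ with ``$p=1$'' to obtain $(|a+b|+|b|)^q\le|a+b|^q+|b|^q$ for $0<q\le1$, and the proof of Lemma \ref{l1q} quotes it as $\|e\|_1\le\|e\|_q$ for all $0<q\le1$. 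So the hypothesis in the statement should read $0<q\le p$ (or the two exponents should be swapped); with that correction your proof establishes precisely the fact the rest of the paper relies on, and there is no gap.
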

With this result, we can prove an important  lemma in this letter.
\begin{lemma}\label{ineqq}
$|a+b|^q+|b|^q\geq|a|^q, \forall a,b\in\mathbb R,0<q\leq1$.
\end{lemma}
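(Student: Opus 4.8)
The plan is to derive the inequality from two elementary ingredients: the ordinary triangle inequality for the absolute value, and the \emph{subadditivity} of the map $t\mapsto t^{q}$ on $[0,\infty)$ for $0<q\le 1$. The idea is that $|a|^q$ can be controlled by $(|a+b|+|b|)^q$ via the triangle inequality and monotonicity, and the latter splits into $|a+b|^q+|b|^q$ by subadditivity.

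Concretely, I would first write $a=(a+b)+(-b)$ and apply the triangle inequality to get $|a|\le |a+b|+|b|$. Since $t\mapsto t^{q}$ is nondecreasing on $[0,\infty)$ for $q>0$, raising both sides to the power $q$ preserves the inequality, so $|a|^{q}\le \bigl(|a+b|+|b|\bigr)^{q}$. The decisive step is then the subadditivity estimate $(s+t)^{q}\le s^{q}+t^{q}$ for all $s,t\ge 0$; applying it with $s=|a+b|$ and $t=|b|$ gives $\bigl(|a+b|+|b|\bigr)^{q}\le |a+b|^{q}+|b|^{q}$, and chaining the two displays yields $|a|^{q}\le |a+b|^{q}+|b|^{q}$, which is the claim.

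It remains to establish the subadditivity, and this is the only nonroutine part. I would handle the trivial case $s+t=0$ separately, and otherwise normalize by setting $u=s/(s+t)\in[0,1]$, so that $1-u=t/(s+t)$; after dividing by $(s+t)^{q}$ it suffices to show $u^{q}+(1-u)^{q}\ge 1$. This reduces to the pointwise inequality $v^{q}\ge v$ for every $v\in[0,1]$ and $0<q\le 1$, which holds because $v^{q}=v\cdot v^{q-1}$ with $v^{q-1}\ge 1$ (as $q-1\le 0$ and $v\le 1$). Summing $u^{q}\ge u$ and $(1-u)^{q}\ge 1-u$ gives $u^{q}+(1-u)^{q}\ge 1$, and multiplying back through by $(s+t)^{q}$ recovers the subadditivity. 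I expect no real obstacle here; the whole argument is a short combination of monotonicity, the triangle inequality, and the scalar bound $v^{q}\ge v$. I note in passing that this subadditivity is exactly the two-term form of the $\ell_q$-norm comparison in the spirit of Lemma~\ref{pq}, so one could alternatively invoke that norm-monotonicity directly instead of reproving it.
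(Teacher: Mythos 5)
Your proof is correct and follows essentially the same route as the paper's: both arguments bound $|a|^q\le(|a+b|+|b|)^q$ via the triangle inequality and monotonicity of $t\mapsto t^q$, and then split the right-hand side using the subadditivity $(s+t)^q\le s^q+t^q$. The only difference is that the paper obtains this subadditivity by invoking Lemma~\ref{pq} with $x=(|a+b|,|b|)$ and $p=1$, whereas you prove it from scratch by normalizing and using $v^q\ge v$ on $[0,1]$ --- a self-contained variant which, incidentally, sidesteps the fact that Lemma~\ref{pq} as printed has its hypothesis reversed (it should read $0<q\le p$, which is the form actually needed here with $p=1\ge q$).
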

\begin{proof}
It is obvious that
\begin{equation}
(|a+b|+|b|)^q\geq|a|^q,
\end{equation}
thus, we only need to prove
\begin{equation}
|a+b|^q+|b|^q\geq(|a+b|+|b|)^q.
\end{equation}
Due to lemma \eqref{pq}, we denote a vector $x=(|a+b|,|b|)$ and $p=1$, then the lemma holds.
\end{proof}

Here is a lemma showing the relationship between the $\ell_1$ norm and the $\ell_q$ quasi-norm of error vector $e$.
\begin{lemma}\label{l1q}
Let $e=x^*_{q,\sigma}-x_0$,
 then
 \begin{equation}
(\gamma N)^{1/q-1}\|e\|_1\leq
\|e\|_{q}\leq n^{1/q-1}\|e\|_1,
\forall 0<q\leq1,
 \end{equation}
where $N=\|x_0\|_0$, $\gamma$ lies in the interval $[1/N,n/N]$, $n$ is the dimension of $e$.
\end{lemma}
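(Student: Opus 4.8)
The plan is to treat both inequalities as instances of the standard equivalence between the $\ell_1$ norm and the $\ell_q$ quasi-norm on $\mathbb{R}^n$; the specific identity $e = x^*_{q,\sigma} - x_0$ plays no role, so I would prove the statement for an arbitrary vector $e \in \mathbb{R}^n$. First I would dispose of the degenerate cases: if $e = 0$ the claim is vacuous, and if $q = 1$ then $1/q - 1 = 0$, both flanking coefficients collapse to $1$, and the chain reduces to $\|e\|_1 \le \|e\|_1 \le \|e\|_1$. Hence I assume $e \neq 0$ and $0 < q < 1$.

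For the upper bound $\|e\|_q \le n^{1/q-1}\|e\|_1$ I would apply H\"older's inequality to the splitting $\sum_i |e_i|^q = \sum_i |e_i|^q \cdot 1$ with the conjugate exponents $1/q$ and $1/(1-q)$ (conjugate because $q + (1-q) = 1$). This gives $\|e\|_q^q = \sum_i |e_i|^q \le (\sum_i |e_i|)^q \, n^{1-q} = n^{1-q}\|e\|_1^q$, and taking $q$-th roots yields exactly $\|e\|_q \le n^{1/q-1}\|e\|_1$.

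For the lower bound I would make the role of $\gamma$ explicit by defining it through $(\gamma N)^{1/q-1} = \|e\|_q / \|e\|_1$, equivalently $\gamma = \tfrac{1}{N}(\|e\|_q/\|e\|_1)^{q/(1-q)}$, so that the left inequality holds with equality by construction. It then remains only to check that this $\gamma$ lies in $[1/N, n/N]$. The lower end $\gamma \ge 1/N$ is equivalent to $\|e\|_q \ge \|e\|_1$, which is precisely Lemma \ref{pq} with $p = 1 \ge q$; the upper end $\gamma \le n/N$ is equivalent to $\|e\|_q \le n^{1/q-1}\|e\|_1$, which is the H\"older bound just established. Thus $\gamma \in [1/N, n/N]$ and both inequalities follow.

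The analytic steps are routine, so the main obstacle is conceptual: correctly pinning down what $\gamma$ is. It is tempting to read $\gamma N$ as the literal support size $\|e\|_0$, but restricting the H\"older argument to the support of $e$ produces $\|e\|_q \le (\|e\|_0)^{1/q-1}\|e\|_1$, which is the reverse of the stated lower bound. The correct reading is that $\gamma N = (\|e\|_q/\|e\|_1)^{q/(1-q)}$ is an \emph{effective} sparsity, equal to the true support size only in the extreme configurations (it equals $1/N$ when $e$ is $1$-sparse and $n/N$ when all entries of $e$ share the same modulus); the content of the lemma is that the two standard norm inequalities bracket this quantity exactly in $[1/N, n/N]$.
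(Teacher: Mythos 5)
Your proof is correct and follows essentially the same route as the paper: H\"older's inequality with conjugate exponents $1/q$ and $1/(1-q)$ for the upper bound, and then pinning $\gamma$ (defined so that the left inequality holds with equality) inside $[1/N,\,n/N]$ via Lemma~\ref{pq} for the lower end and the H\"older bound for the upper end. If anything, your write-up is cleaner than the paper's, since you make the implicit definition $\gamma = \tfrac{1}{N}\left(\|e\|_q/\|e\|_1\right)^{q/(1-q)}$ explicit, handle the degenerate cases $e=0$ and $q=1$, and correctly flag that $\gamma N$ is an effective (not literal) sparsity --- all points the paper glosses over.
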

\begin{proof}
We first prove the second inequality. Based on the definition, we know that
\begin{equation}
\begin{aligned}
\|e\|_q^q&=\sum_{i=1}^n|e_i|^q\\
&= \sum_{i=1}^n|e_i|^q\cdot 1\\
&\leq(\sum_{i=1}^n(|e_i|^q)^{\frac{1}{q}})^{q}(\sum_{i=1}^n
1^{\frac{1}{1-q}})^{1-q}\\
&=\|e\|_1^qn^{1-q}
\end{aligned}
\end{equation}
where the inequality holds due to the Holder's inequality, thus $\|e\|_q\leq n^{1/q-1}\|e\|_1$.

From lemma \eqref{pq}, we know that $\|e\|_1\leq\|e\|_q,\forall 0<q\leq1$, thus $\gamma\geq1/N$. When $e$ has only one nonzero entry, then the first equality in the lemma holds for any $q\in(0,1]$ if and only if $\gamma=1/N$. From the second inequality, we know that $\gamma\leq n/N$. When $e$ has all equal nonzero entries, then the first equality in the lemma holds for any $q\in(0,1]$ if and only if $\gamma=n/N$. Thus $\gamma$ lies in the interval $[1/N,n/N]$.
\end{proof}

Note that $\gamma$ here is a measure of the sparsity level of the error vector $e$. In general, $\gamma$ increases as the number of nonzero entries in $e$ increases. 

We end this section with the following well-known fact.
\begin{lemma}\label{l12}
For any vector $x\in\mathbb R^n$, we have
\begin{equation}
\|x\|_2\leq\|x\|_1\leq\sqrt{n}\|x\|_2.
\end{equation}
\end{lemma}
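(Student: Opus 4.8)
The plan is to establish the two inequalities of Lemma~\ref{l12} separately, both by elementary arguments that mirror computations already appearing in this section.

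For the left inequality $\|x\|_2\le\|x\|_1$, I would argue directly from the definitions by expanding the square of the $\ell_1$ norm:
\begin{equation}
\|x\|_1^2=\Big(\sum_{i=1}^n|x_i|\Big)^2=\sum_{i=1}^n|x_i|^2+2\sum_{1\le i<j\le n}|x_i||x_j|.
\end{equation}
Every cross term $|x_i||x_j|$ is nonnegative, so the right-hand side is at least $\sum_{i=1}^n|x_i|^2=\|x\|_2^2$; taking square roots gives $\|x\|_2\le\|x\|_1$. Equivalently, this is nothing but the monotonicity of the $\ell_p$-(quasi-)norm in $p$ recorded in Lemma~\ref{pq}, applied to the exponents $1$ and $2$, so the step may simply be quoted.

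For the right inequality $\|x\|_1\le\sqrt{n}\,\|x\|_2$, I would invoke the Cauchy--Schwarz inequality in exactly the spirit of the H\"older argument used in the proof of Lemma~\ref{l1q}. Writing each summand as $|x_i|\cdot 1$,
\begin{equation}
\|x\|_1=\sum_{i=1}^n|x_i|\cdot 1\le\Big(\sum_{i=1}^n|x_i|^2\Big)^{1/2}\Big(\sum_{i=1}^n 1^2\Big)^{1/2}=\sqrt{n}\,\|x\|_2,
\end{equation}
which is the desired bound.

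Since each step is a one-line consequence of a classical inequality, there is no genuine obstacle here; the only point worth recording is the tightness of the two bounds, which pins down the constants. The left inequality is an equality precisely when $x$ has at most one nonzero entry (all cross terms vanish), while the right inequality is an equality precisely when all coordinates of $x$ share a common absolute value (the equality case of Cauchy--Schwarz). These are exactly the two extremes that produced the endpoints $1/N$ and $n/N$ of the admissible interval for $\gamma$ in Lemma~\ref{l1q}, so this fact dovetails with the earlier discussion of the sparsity parameter.
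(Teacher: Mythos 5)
Your proof is correct. There is actually nothing in the paper to compare it against: Lemma~\ref{l12} is introduced with ``we end this section with the following well-known fact'' and no proof is given, so your elementary two-step argument --- expanding $\|x\|_1^2$ and discarding the nonnegative cross terms for the left inequality, and Cauchy--Schwarz with the all-ones vector for the right --- is a complete, self-contained verification, and your equality-case discussion (at most one nonzero entry, respectively all entries of equal modulus) does dovetail with the endpoints $1/N$ and $n/N$ of the interval for $\gamma$ in Lemma~\ref{l1q}. One caveat on your remark that the left inequality ``may simply be quoted'' from Lemma~\ref{pq}: as literally stated in the paper, that lemma reads $\|x\|_p\le\|x\|_q$ for $0<p\le q$, which applied with $p=1$, $q=2$ would yield $\|x\|_1\le\|x\|_2$, the reverse of what you need. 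The paper's statement of Lemma~\ref{pq} evidently has its inequality direction flipped relative to how it is used elsewhere (in the proofs of Lemmas~\ref{ineqq} and~\ref{l1q} the standard monotonicity, norms decreasing in the exponent, is what is actually invoked), so you should either drop that quotation or note the correction; your direct expansion of the square makes the point moot for the validity of your proof.
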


\section{Main results and the analysis}
In this section, we present the main result of the stable recovery bound for the noise-aware non-convex minimization problem \eqref{PqD}. This result extends  Donoho and his coauthors' result in \cite{DONOHO06} to the general case where $q\in(0,1]$. 
We first present the main theorem and the details of the proof and then make our comments.
\subsection{Main results}
Before the main theorem, we show a lemma about the optimal solutions to problem \eqref{PqD}.
\begin{lemma}\label{optlemma}
For the optimal solution to problem \eqref{PqD}, the inequality in the constraint is active.
\end{lemma}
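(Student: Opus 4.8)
The plan is to argue by contradiction, exploiting the positive homogeneity of the $\ell_q$ quasi-norm together with the continuity of the constraint map. Suppose, contrary to the claim, that some optimal solution $x^*_{q,\sigma}\in\mathcal X$ satisfies the constraint strictly, i.e.\ $\|y-Ax^*_{q,\sigma}\|_2<\sigma$. The strategy is then to exhibit a feasible perturbation of $x^*_{q,\sigma}$ with strictly smaller $\ell_q$ value, which would contradict optimality and thereby force the constraint to be active.

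First I would use the slack in the constraint by scaling the candidate solution toward the origin. For $t\in[0,1]$ set $x_t=t\,x^*_{q,\sigma}$ and consider the scalar function $g(t)=\|y-Ax_t\|_2$. Since $A$ is linear and the $\ell_2$ norm is continuous, $g$ is continuous on $[0,1]$, and by the contradiction hypothesis $g(1)=\|y-Ax^*_{q,\sigma}\|_2<\sigma$. Hence there is some $t_0\in(0,1)$ with $g(t_0)<\sigma$, so that $x_{t_0}$ is feasible for \eqref{PqD}. By absolute homogeneity of the quasi-norm, $\|x_{t_0}\|_q=t_0\|x^*_{q,\sigma}\|_q<\|x^*_{q,\sigma}\|_q$ whenever $x^*_{q,\sigma}\neq 0$. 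This strict decrease contradicts the optimality of $x^*_{q,\sigma}$, and the contradiction delivers $\|y-Ax^*_{q,\sigma}\|_2=\sigma$.

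The step that requires care --- and the \emph{main obstacle} --- is the degenerate case $x^*_{q,\sigma}=0$, for which the scaling argument is vacuous, since shrinking the zero vector changes nothing. I would dispose of this by noting that $x=0$ attains the global minimum value $\|\cdot\|_q=0$ only when it is feasible, i.e.\ only when $\|y\|_2\leq\sigma$; in the meaningful recovery regime the noise estimate satisfies $\sigma<\|y\|_2$, so the zero vector is infeasible and every optimal $x^*_{q,\sigma}$ is necessarily nonzero. Under this standing assumption the argument above applies to \emph{any} $x^*_{q,\sigma}\in\mathcal X$, so the inequality in the constraint is active for every optimal solution, as claimed. Should the trivial regime $\|y\|_2\leq\sigma$ be admitted, the lemma can only hold after explicitly excluding it, which is why I would state and use the condition $\sigma<\|y\|_2$.
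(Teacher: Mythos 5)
Your proof is correct, but it takes a genuinely different route from the paper. The paper rewrites \eqref{PqD} in squared form, writes down the KKT system \eqref{KKT} restricted to the support set of the minimizer, observes that $\lambda^*=0$ would force $x^*_{q,\sigma}=0$ via the gradient equation, rules out the zero vector as infeasible (``since $y$ is a nonzero observation vector with noise''), and then concludes activity from the complementarity condition $\lambda^*(\|y-Ax^*_{q,\sigma}\|_2^2-\sigma^2)=0$. Your scaling argument --- perturbing along $x_t=t\,x^*_{q,\sigma}$ and using absolute homogeneity $\|x_t\|_q=t\|x^*_{q,\sigma}\|_q$ together with continuity of $t\mapsto\|y-Ax_t\|_2$ --- reaches the same conclusion without any appeal to first-order conditions. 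This buys you something real: for $0<q<1$ the objective $\|x\|_q^q$ is nonsmooth (and nonconvex) wherever a coordinate vanishes, so the paper must restrict the gradient equation to the support set and implicitly assumes that a constraint qualification holds so that a global minimizer is a KKT point; your argument needs neither differentiability nor a constraint qualification and applies directly to \emph{every} global minimizer in $\mathcal X$. Both proofs hinge on the same hidden hypothesis, which you make explicit and the paper leaves informal: the zero vector must be infeasible, i.e.\ $\sigma<\|y\|_2$. Indeed, if $\|y\|_2<\sigma$ then $x^*_{q,\sigma}=0$ is optimal with an inactive constraint and the lemma fails, so stating the condition $\sigma<\|y\|_2$, as you do, is the honest way to formulate the result.
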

\begin{proof}
In order to prove this result, we rewrite the problem \eqref{PqD} into an equivalent form
\begin{equation}\label{PqD2}
\begin{aligned}
\min_{x}&~\|x\|_{q}^q\\
 \text{s.t.}&~||y-{A}x||_2^2\leq\sigma^2,\\
 &~0<q\leq1.
\end{aligned}
\end{equation}
The optimal solution to problem \eqref{PqD2} also solves problem \eqref{PqD}. Assume that $x^*_{q,\sigma}$ is a local minimizer of problem \eqref{PqD2}, and the support set of $x^*_{q,\sigma}$ is $\mathcal S$, $x^*_{q,\sigma}$ is also the stationary point. Then the first-order optimality conditions for this problem holds if there exists a pair of $(x^*_{q,\sigma},\lambda^*)$ such that
\begin{equation}\label{KKT}
\begin{aligned}
\nabla(\|(x_{q,\sigma}^*)_{\mathcal S }\|_{q}^{q})-\lambda^*{A_{\mathcal S}^T}(y_{\mathcal S}-{A_{\mathcal S}}(x_{q,\sigma}^*)_{\mathcal S})&=0,\\
\lambda^*(\|y-{A}x_{q,\sigma}^*\|_2^2-\sigma^2)&=0,\\
\|y-{A}x_{q,\sigma}^*\|_2^2&\leq\sigma^2,\\
\lambda^*&\geq0.
\end{aligned}
\end{equation}
From the first equality in \eqref{KKT}, we know that if $\lambda^*=0$, then $x_{q,\sigma}^*$ should be a zero vector. The zero vector may not satisfy the third inequality in the KKT system \eqref{KKT}, since $y$ is a nonzero observation vector with noise. Thus, we do accept the fact that $\lambda^*>0$. And then according to the complementarity condition, we will have $\|y-{A}x_{q,\sigma}^*\|_2^2=\sigma^2$, i.e., the conclusion of the lemma holds.
\end{proof}

We are now ready to present our main result.
\begin{theorem}\label{lqnormbound}
Suppose the ideal sparse representation signal satisfies
\begin{equation}\label{lqresult}
 N:=\|x_0\|_0<\frac{\gamma^{2/q-2}(M+1)}{4^{1/q}M},
\end{equation}
where M is defined in \eqref{mutualco} and $\gamma$ is defined in Lemma \eqref{l1q}. Then the difference between any solution to $P_{q,\sigma}$ and $x_0$, assuming $\sigma\geq\epsilon$, is upper bounded
\begin{equation}\label{reBound}
\|x^*_{q,\sigma}-x_0\|_2^2\leq
\frac{(\epsilon+\sigma)^2}
{{1-M(\frac{4^{1/q}}{\gamma^{2/q-2}}N-1)}}, \forall x^*_{q,\sigma}\in\mathcal X.
\end{equation}
\end{theorem}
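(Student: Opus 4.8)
The plan is to follow a Donoho-style argument, controlling the error vector $e = x^*_{q,\sigma} - x_0$ through three independent estimates and then combining them. First I would establish that $x_0$ itself is feasible for $P_{q,\sigma}$: since $y = Ax_0 + w$ with $\|w\|_2 \leq \epsilon \leq \sigma$, we have $\|y - Ax_0\|_2 = \|w\|_2 \leq \sigma$. Optimality of $x^*_{q,\sigma}$ then forces $\|x^*_{q,\sigma}\|_q^q \leq \|x_0\|_q^q$. Splitting both sides over $S := \operatorname{supp}(x_0)$ (with $|S| = N$) and its complement, and applying Lemma~\ref{ineqq} entrywise on $S$ in the form $|x_{0,i} + e_i|^q \geq |x_{0,i}|^q - |e_i|^q$, the $\|x_0\|_q^q$ terms cancel and I obtain the cone-type inequality $\|e_{S^c}\|_q^q \leq \|e_S\|_q^q$, hence $\|e\|_q^q \leq 2\|e_S\|_q^q$.

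Next I would convert this into a comparison between $\|e\|_1$ and $\|e\|_2$. Applying Hölder's inequality on the $N$-element support gives $\|e_S\|_q^q \leq N^{1-q/2}\|e_S\|_2^q \leq N^{1-q/2}\|e\|_2^q$, while the left inequality of Lemma~\ref{l1q} raised to the $q$-th power gives $(\gamma N)^{1-q}\|e\|_1^q \leq \|e\|_q^q$. Chaining these through $\|e\|_q^q \leq 2\|e_S\|_q^q$ and simplifying the powers of $N$ yields $\|e\|_1^2 \leq \frac{4^{1/q}}{\gamma^{2/q-2}} N \|e\|_2^2$. This is the step where the constants $4^{1/q}$ and $\gamma^{2/q-2}$ of the theorem are born: the factor $2$ from the cone inequality becomes $4^{1/q}$ after taking $q$-th roots and squaring.

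The third estimate is a feasibility bound on $\|Ae\|_2$: by the triangle inequality $\|Ae\|_2 \leq \|Ax^*_{q,\sigma} - y\|_2 + \|y - Ax_0\|_2 \leq \sigma + \epsilon$. Finally I would invoke the mutual-coherence lower bound on the Gram quadratic form, namely $\|Ae\|_2^2 = e^T(A^T A)e \geq (1+M)\|e\|_2^2 - M\|e\|_1^2$, which follows from $\|A_i\|_2 = 1$ and $|A_i^T A_j| \leq M$ together with $\sum_{i\neq j}|e_i||e_j| = \|e\|_1^2 - \|e\|_2^2$. Substituting the $\ell_1$–$\ell_2$ comparison into the $-M\|e\|_1^2$ term and bounding $\|Ae\|_2^2 \leq (\epsilon+\sigma)^2$ produces $\left[1 - M\left(\frac{4^{1/q}}{\gamma^{2/q-2}}N - 1\right)\right]\|e\|_2^2 \leq (\epsilon+\sigma)^2$. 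The sparsity hypothesis \eqref{lqresult} is exactly what makes the bracketed coefficient positive, so dividing through gives \eqref{reBound}.

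The main obstacle I anticipate is the second paragraph: assembling the correct constant in the $\ell_1$–$\ell_2$ inequality. One must apply Lemma~\ref{l1q} to the full vector $e$ (which brings in $\gamma$ and powers of $N$) while applying Hölder only on the support $S$, and then reconcile the exponents of $N$ so that they collapse to the single factor $N^{q/2}$; keeping track of the interplay between the global constant $\gamma$ and the localized support bound is the delicate part. By contrast, the Gram-matrix inequality and the feasibility bound are routine, and the positivity of the denominator is a direct algebraic consequence of the hypothesis.
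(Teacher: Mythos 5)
Your proposal is correct and follows essentially the same route as the paper: the cone-type inequality $\|e\|_q^q\leq 2\|e_{\mathcal S}\|_q^q$ obtained from optimality of $x^*_{q,\sigma}$ plus Lemma \ref{ineqq}, the coherence bound $(1+M)\|e\|_2^2-M\|e\|_1^2\leq\|Ae\|_2^2\leq(\epsilon+\sigma)^2$, and the combination of Lemma \ref{l1q} with H\"older on the support are exactly the paper's ingredients, and your constants $4^{1/q}$ and $\gamma^{2/q-2}$ arise in the same way. The only difference is presentational: the paper packages these same estimates as a sequence of relaxed worst-case optimization problems with auxiliary variables ($V_{\mathcal S}$, $\eta_{\mathcal S}$, $\tau$, $\mu$, where the bound $\mu\leq 4^{1/q}N^{2/q-1}$ is precisely your chained inequality $\|e\|_q^2\leq 4^{1/q}N^{2/q-1}\|e\|_2^2$), whereas you chain the inequalities directly and thereby also bypass the KKT-based Lemma \ref{optlemma}, which the paper invokes but does not actually need.
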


\begin{proof}
Let $x^*$ be the optimal solution to problem \eqref{PqD}, denote the error as $e = x^*-x_0$, the process to estimate the bound can be formulated into an optimization problem in this form
\begin{equation}\label{q}
\begin{aligned}
\max~&~\|e\|_2^2\\
\text{s.t.}~ &~\mathcal X:=\arg\min_{x}\{\|x\|_{q}^{q}:\|{A}x-y\|_2\leq\sigma\},\\
&~x^*\in\mathcal X,\\
&~y={A}x_0+w,\\
&~\|w\|_2\leq\epsilon,\|x_0\|_0=N.
\end{aligned}
\end{equation}
The constraints indicate that we consider the worst case since all the optimal solutions to problem \eqref{PqD} are taken into consideration. Define the optimal function value of \eqref{q} as Val\eqref{q}. The upper bound of Val\eqref{q} is the target, and the main idea of the proof is to expand the feasible set of problem \eqref{q} sequentially and obtain a series of upper bound on Val\eqref{q}.

Note that $x_0$ itself is a feasible point of the constraint $\|{A}x-y\|_2\leq\sigma$. Based on the conclusion of Lemma \eqref{optlemma}, do some simple substitution and eliminate $y$, we can relax the constraints in \eqref{q} to
\begin{equation}\label{omegaq2}
\left\{e\biggl|
\begin{aligned}
&\|x_0+e\|_{q}^{q}\leq\|x_0\|_{q}^{q},
\|{A}e-w\|_2=\sigma\\
&\|w\|_2\leq\epsilon,\|x_0\|_0=N
\end{aligned}
\right\}.
\end{equation}
Define $\mathcal S=\{i|(x_0)_i\neq0\}$ as the support set and the complement set as $\mathcal S^c=\{1,2,...,n\}\backslash\mathcal S$. From Lemma \eqref{ineqq} we know that
\begin{equation}
\begin{aligned}
\|x_0+e\|_{q}^{q}-\|x_0\|_{q}^{q}
\geq&\|e_{\mathcal S^c}\|_{q}^{q}-\|e_{\mathcal S}\|_{q}^{q}
=\|e\|_{q}^{q}-2\sum_{i\in \mathcal S}|e_i|^{q}.
\end{aligned}
\end{equation}
Therefore, we can relax the feasible set further and rewrite the problem \eqref{q} as
\begin{equation}\label{omegaq3}
\begin{aligned}
\max_{e,\mathcal S,w}&~\|e\|_2^2\\
\text{s.t.}&~\|e\|_{q}^{q}\leq2\sum_{i\in \mathcal S}|e_i|^{q},\\
&~\|{A}e-w\|_2=\sigma,\\
&~\|w\|_2\leq\epsilon,\#\mathcal S=N,
\end{aligned}
\end{equation}
which has a larger optimal function value than \eqref{q}.

Using the fact that $\|Ae-w\|_2\geq\|Ae\|_2-\|w\|_2$,
we can relax the feasible set in \eqref{omegaq3} and obtain
\begin{equation}\label{omegaq4}
\begin{aligned}
\max_{e,\mathcal S}&~\|e\|_2^2\\
\text{s.t.}&~\|e\|_{q}^{q}\leq2\sum_{i\in \mathcal S}|e_i|^{q},\\
&~\|{A}e\|_2\leq\Sigma,\#\mathcal S=N,
\end{aligned}
\end{equation}
where $\Sigma=\epsilon+\sigma$.

Now it's turn to deal with the constraint $\|{A}e\|_2\leq\Sigma$. Recall the definition of the {\it mutual coherence}, $M = M({A})=max_{i\neq j}|G(i,j)|$, where $G$ is the Gram matrix $G=A^TA$. For any matrix $X$, denote $|X|$ as the matrix after taking absolute values for all the entries in $X$, this include the vector and the constant as the special case. Denote {\bf 1} as a square matrix with all entries equal to one, and the dimension is correspondingly adaptive. Then we can relax the constraint as follows
\begin{equation}
\begin{aligned}
\Sigma^2\geq&\|{A}e\|_2^2=e^T{\bf G}e=\|e\|_2^2+e^T{\bf (G-I)}e\\
\geq&\|e\|_2^2-|e|^T{\bf |G-I|}|e|\\
\geq&\|e\|_2^2-M|e|^T{\bf |1-I|}|e|\\
=&(1+M)\|e\|_2^2-M\|e\|_1^2\\
\geq&(1+M)\|e\|_2^2-\frac{M}{(\gamma N)^{2/q-2}}\|e\|_{q}^2,\\
\end{aligned}
\end{equation}
where the last inequality is due to Lemma \eqref{l1q}. Thus, we can obtain the following problem with a larger optimal function value
\begin{equation}\label{omegaq5}
\begin{aligned}
\max_{e,\mathcal S}&~\|e\|_2^2\\
\text{s.t.}&~\|e\|_{q}^{q}\leq2\sum_{i\in \mathcal S}|e_i|^{q},\\
&~(1+M)\|e\|_2^2-\frac{M}{(\gamma N)^{2/q-2}}\|e\|_{q}^{2}\leq\Sigma^2,\#\mathcal S=N.
\end{aligned}
\end{equation}

Now we can see that the target term of the objective function appears in the second constraint. Our task is to replace the other terms in the constraints with the target term and obtain an upper bound for the function value. First, we separate the error vector into two parts, $e = (e_{\mathcal S},e_{\mathcal S^c})$, where $e_{\mathcal S}=\{e_i|i\in\mathcal S\}$ and $e_{\mathcal S^c}=\{e_i|i\in\mathcal S^c\}$. Then it's obvious that
\begin{equation}
\begin{aligned}
\|e\|_2^2=&\|e_{\mathcal S}\|_2^2+\|e_{\mathcal S^c}\|_2^2,\\
\|e\|_{q}^{2}=&(\|e_{\mathcal S}\|_{q}^{q}+\|e_{\mathcal S^c}\|_{q}^{q})^{2/q}.
\end{aligned}\end{equation}
Based on Lemma\eqref{l1q} and \eqref{l12}, we have
\begin{equation}
\begin{aligned}
\|e_{\mathcal S}\|_2&\leq\|e_{\mathcal S}\|_{q}\leq N^{(1/q-1/2)}\|e_{\mathcal S}\|_2,\\
\|e_{\mathcal S^c}\|_2&\leq\|e_{\mathcal S^c}\|_{q}\leq (n-N)^{(1/q-1/2)}\|e_{\mathcal S^c}\|_2.
\end{aligned}
\end{equation}
In order to make the symbol simpler, we define the following intermediate variables
\begin{equation}
\begin{aligned}
V_{\mathcal S}=&\|e_{\mathcal S}\|_{q}^{q},V_{\mathcal S^c}=\|e_{\mathcal S^c}\|_{q}^{q},\\
\eta_{\mathcal S}=&\|e_{\mathcal S}\|_2^2/\|e_{\mathcal S}\|_{q}^2 ,\eta_{\mathcal S^c}=\|e_{\mathcal S^c}\|_2^2/\|e_{\mathcal S^c}\|_{q}^2,
\end{aligned}
\end{equation}
and reformulate the optimization problem \eqref{omegaq5} into an optimization problem on $(V_{\mathcal S},V_{\mathcal S^c},\eta_{\mathcal S},\eta_{\mathcal S^c})\in R^4$
\begin{equation}\label{ABq1}
\begin{aligned}
\max &~\eta_{\mathcal S}V_{\mathcal S}^{2/q}+\eta_{\mathcal S^c}V_{\mathcal S^c}^{2/q}\\
\text{s.t.}&~(1+M)(\eta_{\mathcal S}V_{\mathcal S}^{2/q}+\eta_{\mathcal S^c}V_{\mathcal S^c}^{2/q})\\
&-\frac{M}{(\gamma N)^{2/q-2}}(V_{\mathcal S}+V_{\mathcal S^c})^{2/q}\leq\Sigma^2,\\
&~0\leq V_{\mathcal S^c}\leq V_{\mathcal S},N^{1-2/q}\leq \eta_{\mathcal S}\leq1,0<\eta_{\mathcal S^c}\leq1.
\end{aligned}
\end{equation}
Define the ratio between $V_{\mathcal S}$ and $V_{\mathcal S^c}$ as $\tau=V_{\mathcal S^c}/V_{\mathcal S}$, then we have $0\leq\tau\leq1$. \eqref{ABq1} can be reformulated as
\begin{equation}\label{ABq2}
\begin{aligned}
\max &~(\eta_{\mathcal S}+\eta_{\mathcal S^c}\tau^{2/q})V_{\mathcal S}^{2/q}\\
\text{s.t.}&~(1+M)(\eta_{\mathcal S}+\eta_{\mathcal S^c}\tau^{2/q})V_{\mathcal S}^{2/q}\\
&-\frac{M}{(\gamma N)^{2/q-2}}(1+\tau)^{2/q}V_{\mathcal S}^{2/q}\leq\Sigma^2,\\
&~V_{\mathcal S}\geq0,N^{1-2/q}\leq \eta_{\mathcal S}\leq1,0<\eta_{\mathcal S^c}\leq1,0\leq\tau\leq1.
\end{aligned}
\end{equation}

Let $\mu=(1+\tau)^{2/q}/(\eta_{\mathcal S}+\eta_{\mathcal S^c}\tau^{2/q})$, then it is easy to see that $1\leq\mu\leq4^{1/q}N^{2/q-1}$ under the constraints in \eqref{ABq2}. Denote the objective function value as $V=V_{\mathcal S}^{2/q}(\eta_{\mathcal S}+\eta_{\mathcal S^c}\tau^{2/q})$, then the first constraint in \eqref{ABq2} is equivalent to
\begin{equation}
 (1+M)V-\frac{M}{(\gamma N)^{2/q-2}}\mu V\leq\Sigma^2.
\end{equation}
Substitute the upper bound of $\mu$ into this inequality, and to maintain the coefficient of $V$ to be positive, we will have the requirement of sparsity $N$ as follows
\begin{equation}
(1+M)-\frac{M}{(\gamma N)^{2/q-2}}\mu
\geq(1+M)-M\frac{4^{1/q}N}{\gamma^{2/q-2}}>0.
\end{equation}
Thus, we have
\begin{equation}
 N<\frac{\gamma^{2/q-2}(M+1)}{4^{1/q}M},
\end{equation}
and
\begin{equation}
 V\leq\frac{\Sigma^2}{1-
 M(\frac{\mu}{(\gamma N)^{2/q-2}}-1)} \leq\frac{\Sigma^2}{1-
 M(\frac{4^{1/q}N}{\gamma^{2/q-2}}-1)},
\end{equation}
which concludes the proof.
\end{proof}

\subsection{On the parameter $\gamma$ in the recovery bound \eqref{reBound}}

From the above results, we can see that, the parameter $\gamma$ plays an important role in the recovery bound \eqref{reBound}. We will remark that for which $\gamma$ the recovery bound of model \eqref{PqD} is better than that of model \eqref{P1D}.

%

{\bf Case 1:} if $\gamma=2$,
we have $C_q(M,N,\gamma)=C_1(M,N)$. Thus the sparsity requirement in \eqref{PqD} is equal to that in \eqref{P1D}, and the recovery bound of model \eqref{PqD} is the same to that of model \eqref{P1D}. This leads to the conclusion that the non-convex minimization model \eqref{PqD} approximates the original model \eqref{P0} no worse than the convex minimization model \eqref{P1D}.

{\bf Case 2:} if $\gamma>2$,
we have $C_q(M,N,\gamma)<C_1(M,N)$. Thus the sparsity requirement in \eqref{PqD} is weaker than that in \eqref{P1D}, and the recovery bound of model \eqref{PqD} is smaller than that of model \eqref{P1D}. This means that the non-convex minimization model \eqref{PqD} is better than the convex minimization model \eqref{P1D} in approximating the original model \eqref{P0}.

For example, let $q=1/2$. From Lemma \eqref{l1q}, we have
\begin{equation}\label{case11}
\gamma N\|e\|_1\leq\|e\|_{1/2}.
\end{equation}
Since $\gamma>2$, from simple calculation, we can see that \eqref{case11} holds if
\begin{equation}\label{case32}
\max_{i=1,2,...,n}|e_i|^{1/2}\leq\frac{\sum_{j\neq i}|e_j|^{1/2}}{\gamma N-1},
\end{equation}
i.e.
\begin{equation}\label{case33}
2<\gamma\leq\frac{\sum_{j}|e_j|^{1/2}}{N*\max_{i=1,2,...,n}|e_i|^{1/2}}.
\end{equation}
 Condition \eqref{case33} requires that the number of nonzero entries in error vector $e$ should be larger than $\gamma N$ in this case. This requirement seems to be very strong, since it says that the worst solution in the optimal solution set $\mathcal X$ has at least $N$ tails except the right sparse locations. However, recent numerical experiments indicate that this could be true in the noise-aware case; see for example \cite{TK16}. Thus, it is conceivable that for the noise-aware models \eqref{P1D} and \eqref{PqD}, \eqref{PqD} outperforms \eqref{P1D} in approximating \eqref{P0} in the sense that \eqref{PqD} has a smaller worst-case stable recovery bound and weaker requirement for the sparsity of the ideal signal.

\section{Conclusion}
In this letter, we present a unified worst-case stable recovery bound for the noise-aware $\ell_q$ $(0<q\leq1)$ minimization problem. The result reduces to the original result in Donoho's paper \cite{DONOHO06} when $q=1$. We introduce a parameter $\gamma$ which describes the relationship between the $\ell_1$ norm and the $\ell_q$ quasi-norm of the error vector $e$. Further analysis indicates that, for $\gamma>2$, the non-convex $\ell_q$ $(0<q<1)$ minimization model outperforms the convex $\ell_1$ model, in the sense that, the former model has a smaller worst-case recovery bound and weaker sparsity requirement.

Moreover, the recovery bound for \eqref{PqD} proposed in this letter is very loose due to the unduely relaxation, similar to the recovery bound for \eqref{P1D} in \cite{DONOHO06}. 
 Even so, the result in this letter still has some guiding significance. Since it is easier to compute the constant in the recovery bound \eqref{reBound} for a given problem. We are also looking forward to the extension of this result to the general sparse group sparse optimization problem\cite{simon2013sparse,vincent2014sparse}, which is also popular in the recent period.


\balance
\bibliographystyle{unsrt}
\bibliography{Lq_IEEE}

\end{document}